\documentclass[10pt]{amsart}
\usepackage{amsmath,amscd}
\usepackage{amsbsy}
\usepackage{amssymb}
\usepackage{amscd,amsthm}
\usepackage[all,cmtip]{xy}

\newtheorem{thm}{Theorem}
\numberwithin{thm}{section}
\newtheorem{lem}[thm]{Lemma}
\newtheorem{prop}[thm]{Proposition}

\newtheorem{rema}[thm]{Remark}

\newtheorem{defi}[thm]{Definition}

\newtheorem*{que2}{Question}

\begin{document}
\begin{center}
\huge{Rationality of inner twisted flags of type $A_n$}\\[1cm]
\end{center}

\begin{center}

\large{Sa$\mathrm{\check{s}}$a Novakovi$\mathrm{\acute{c}}$}\\[0,4cm]
{\small January 2020}\\[0,3cm]
\end{center}

\noindent{\small \textbf{Abstract}. 
We prove that $\mathrm{rcodim}(X)\geq 2 $ if $X$ is a rational inner twisted flag of type $A_n$.\\

\section{Introduction}
A potential measure for rationality was introduced by Bernardara and Bolognesi \cite{BBS} with the notion of categorical representability. We use the definition given in \cite{AB1S}. A $k$-linear triangulated category $\mathcal{T}$ is said to be \emph{representable in dimension $m$} if there is a semiorthogonal decomposition (see Section 3 for the definition) $\mathcal{T}=\langle \mathcal{A}_1,...,\mathcal{A}_n\rangle$ and for each $i=1,...,n$ there exists a smooth projective connected variety $Y_i$ with $\mathrm{dim}(Y_i)\leq m$, such that $\mathcal{A}_i$ is equivalent to an admissible subcategory of $D^b(Y_i)$. We set
\begin{eqnarray*}
	\mathrm{rdim}(\mathcal{T}):=\mathrm{min}\{m\mid \mathcal{T}\  \textnormal{is representable in dimension m}\},
\end{eqnarray*}
whenever such a finite $m$ exists. Let $X$ be a smooth projective $k$-variety. One says $X$ is \emph{representable in dimension} $m$ if $D^b(X)$ is representable in dimension $m$. We will use the following notations:
\begin{eqnarray*}
	\mathrm{rdim}(X):=\mathrm{rdim}(D^b(X)),\thickspace \mathrm{rcodim}(X):= \mathrm{dim}(X)-\mathrm{rdim}(X).
\end{eqnarray*}
Note that when the base field $k$ of a variety $X$ is not algebraically closed, the existence of $k$-rational points on $X$ and whether $X$ is $k$-rational is a major open question in arithmetic geometry. We recall the following question which was formulated in \cite{MBTS}:
\begin{que2}
	Let $X$ be a smooth projective variety over $k$ of dimension at least 2. Suppose $X$ is $k$-rational. Do we have $\mathrm{rcodim}(X)\geq 2$ ?
\end{que2} 
There are several results suggesting that this question has a positive answer (see \cite{AB1S}, \cite{ABS}, \cite{MBTS}, \cite{NO2Z}, \cite{NO3T}, \cite{NOOU} and references therein). In this context, we prove: 
\begin{thm}
Let $A$ be a central simple $k$-algebra of degree $n$ over a field of characteristic zero. We fix a sequence of integers $0<n_1<n_2<\cdots <n_r<n$ and let $X=\mathrm{BS}(n_1,...,n_r,A)$ be a twisted flag of dimension at least two. If $X$ is rational over $k$, then $\mathrm{rcodim}(X)\geq 2$.	
	\end{thm}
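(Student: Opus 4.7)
The plan is to exploit the type-$A_n$ semiorthogonal decomposition
\[
D^b(X)=\langle D^b(k,A^{\otimes m_1}),\dots,D^b(k,A^{\otimes m_N})\rangle
\]
of a twisted flag $X=\mathrm{BS}(n_1,\dots,n_r,A)$, whose blocks are indexed by Young diagrams inscribed in a rectangle associated to $(n_1,\dots,n_r,n)$ with exponents $m_i$ determined by the content of each diagram. This is the twisted analogue of Kapranov's exceptional collection on the split flag variety. Combining it with Bernardara's admissible embedding $D^b(k,A^{\otimes m})\hookrightarrow D^b(\mathrm{SB}(A^{\otimes m}))$ yields the uniform estimate
\[
\mathrm{rdim}(X)\leq \max_i\bigl(\mathrm{ind}(A^{\otimes m_i})-1\bigr),
\]
which reduces the problem to controlling the indices of the relevant tensor powers of $A$.

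The heart of the argument is then to use the rationality of $X$ to force each $A^{\otimes m_i}$ occurring in the SOD to be split. For $X=\mathrm{SB}(A)$ this is Amitsur's classical theorem, and the author's earlier works cited in the introduction handle several individual families of twisted flags. A uniform treatment should proceed by induction on the flag length $r$: one projects $X$ onto a shorter twisted flag $X'$ of the same algebra $A$, transfers the rationality hypothesis along this projection, and then applies the inductive hypothesis together with the Merkurjev--Panin--Wadsworth index-reduction formula over the function field $k(X)$ to conclude that every $A^{\otimes m_i}$ appearing in the SOD is split over $k$.

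Once this splitting is established, each block is equivalent to $D^b(k)$ by Morita equivalence, the decomposition becomes a full exceptional collection over $k$, so $\mathrm{rdim}(X)=0$, and the assumption $\dim(X)\geq 2$ yields $\mathrm{rcodim}(X)=\dim(X)\geq 2$. The main obstacle is the rationality-to-splitting step: for generalized Severi-Brauer varieties $\mathrm{SB}(d,A)$ with $1<d<n-1$ the implication ``rationality $\Rightarrow$ $A$ split'' is not known in general, so the inductive argument must exploit the extra structure that $X$ fibres simultaneously over several Brauer-Severi varieties $\mathrm{SB}(A^{\otimes m})$, using the rational point on $X$ to produce rational points on each of these factors and thereby descend the needed index reductions. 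An alternative route, if the full splitting cannot be forced, is to argue directly at the level of Young-diagram combinatorics that the maximal block dimension is at most $\dim(X)-2$; this would require a more delicate bookkeeping but avoids the sharp rationality-to-splitting dichotomy.
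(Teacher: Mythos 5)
Your first step --- the Kapranov-type semiorthogonal decomposition of $D^b(X)$ into blocks $D^b(A^{\otimes m_i})$, combined with Bernardara's admissible embedding of each block into the derived category of the Brauer--Severi variety of the underlying division algebra, giving $\mathrm{rdim}(X)\leq \mathrm{ind}(A)-1$ --- is exactly what the paper does (Lemma 4.1 and Proposition 4.2, via Baek and Bernardara). The gap is in the second half. The implication ``$X$ rational $\Rightarrow$ every $A^{\otimes m_i}$ occurring in the decomposition is split'', which you propose to force by induction on the flag length and index reduction, is not merely unknown: it is false. A twisted flag of type $A_n$ is a projective homogeneous variety under $\mathrm{PGL}_1(A)$, hence is rational as soon as it has a $k$-point, which happens precisely when $\mathrm{ind}(A)\mid\gcd(n,n_1,\dots,n_r)$. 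Thus, for example, $X=\mathrm{BS}(2,M_2(D))$ with $D$ a quaternion division algebra is rational, yet $A=M_2(D)$ is not split and the decomposition genuinely contains the block $D^b(D)\not\simeq D^b(k)$. Your intended conclusion $\mathrm{rdim}(X)=0$ therefore cannot be the mechanism; the theorem has content exactly in these non-split rational cases.

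The missing idea is much softer than splitting: rationality is used only through the existence of a $k$-rational point, which is equivalent to the divisibility $\mathrm{ind}(A)\mid\gcd(n,n_1,\dots,n_r)$. Writing $n=b\cdot\mathrm{ind}(A)$ and $n_i=b_i\cdot\mathrm{ind}(A)$, one gets $\dim(X)=a\cdot\mathrm{ind}(A)^2$ with $a=b_1(b-b_1)+\sum_{i\geq 2}(b_i-b_{i-1})(b-b_i)\geq 1$, and the elementary inequality $a x^2-x-1\geq 0$ for $x\geq 2$ yields $\dim(X)\geq \mathrm{ind}(A)+1=\bigl(\mathrm{ind}(A)-1\bigr)+2\geq \mathrm{rdim}(X)+2$; the split case $\mathrm{ind}(A)=1$ is handled separately by Kapranov's full exceptional collection together with $\dim(X)\geq 2$. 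Your fallback of ``Young-diagram bookkeeping'' on block dimensions cannot close the gap either, because it never invokes the rationality hypothesis: for $X=\mathrm{BS}(D)$ with $D$ a division algebra of degree $n$ one has $\dim(X)=n-1=\mathrm{ind}(D)-1$, so without the divisibility constraint the required margin of $2$ between $\dim(X)$ and the bound on $\mathrm{rdim}(X)$ simply does not exist.
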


{\small \textbf{Acknowledgement}. I would like to thank Nikita Semenov for answering questions concerning twisted flag varieties. 
This research was conducted in the framework of the research training group GRK 2240: Algebro-geometric Methods in Algebra, Arithmetic and Topology, which is funded by the $\mathrm{DFG}$.}\\


\section{Inner twisted flags of type $A_n$}
Recall that a finite-dimensional $k$-algebra $A$ is called \emph{central simple} if it is an associative $k$-algebra that has no two-sided ideals other than $0$ and $A$ and if its center equals $k$. If the algebra $A$ is a division algebra it is called \emph{central division algebra}. Note that $A$ is a central simple $k$-algebra if and only if there is a finite field extension $k\subset L$, such that $A\otimes_k L \simeq M_n(L)$. This is also equivalent to $A\otimes_k \bar{k}\simeq M_n(\bar{k})$. An extension $k\subset L$ such that $A\otimes_k L\simeq M_n(L)$ is called splitting field for $A$. The \emph{degree} of a central simple algebra $A$ is defined to be $\mathrm{deg}(A):=\sqrt{\mathrm{dim}_k A}$. According to the \emph{Wedderburn Theorem}, for any central simple $k$-algebra $A$ there is an unique integer $n>0$ and a division $k$-algebra $D$ such that $A\simeq M_n(D)$. The division algebra $D$ is also central and unique up to isomorphism. The degree of the unique central division algebra $D$ is called the \emph{index} of $A$ and is denoted by $\mathrm{ind}(A)$. Two central simple algebras $A$ and $B$ are said to be \emph{Brauer-equivalent} if there are positive integers $r,s$ such that $M_r(A)\simeq M_s(B)$. 

A Brauer--Severi variety of dimension $n$ can also be defined as a scheme $X$ of finite type over $k$ such that $X\otimes_k L\simeq \mathbb{P}^n$ for a finite field extension $k\subset L$. A field extension $k\subset L$ for which $X\otimes_k L\simeq \mathbb{P}^n$ is called \emph{splitting field} of $X$. Clearly, $k^s$ and $\bar{k}$ are splitting fields for any Brauer--Severi variety. In fact, every Brauer--Severi variety always splits over a finite Galois extension. It follows from descent theory that $X$ is projective, integral and smooth over $k$. For details we refer to \cite{ARS} and \cite{GSS}.

To a central simple $k$-algebra $A$ one can also associate twisted forms of Grassmannians. Let $A$ be of degree $n$ and $1\leq d\leq n$. Consider the subset of $\mathrm{Grass}_k(d\cdot n, A)$ consisting of those subspaces of $A$ that are left ideals $I$ of dimension $d\cdot n$. This subset can be given the structure of a projective variety which turns out to be a generalized Brauer--Severi variety. It is denoted by $\mathrm{BS}(d,A)$. After base change to some splitting field $L$ of $A$ the variety $\mathrm{BS}(d,A)$ becomes isomorphic to $\mathrm{Grass}_L(d,n)$. If $d=1$ the generalized Brauer--Severi variety is the Brauer--Severi variety associated to $A$. For details see \cite{BLS}. 

We also recall the basics of generalized Brauer--Severi schemes (see \cite{LSW}). Let $X$ be a noetherian $k$-scheme and $\mathcal{A}$ a sheaf of Azumaya algebras of rank $n^2$ over $X$ (see \cite{GRO}, \cite{GRO1} for details on Azumaya algebras). For an integer $1\leq n_1<n$ the generalized Brauer--Severi scheme $p:\mathrm{BS}(n_1,\mathcal{A})\rightarrow X$ is defined as the scheme representing the functor $F:\mathrm{Sch}/X\rightarrow \mathrm{Sets}$, where $(\psi:Y\rightarrow X )$ is mapped to the set of left ideals $\mathcal{J}$ of $\psi^*\mathcal{A}$ such that $\psi^*\mathcal{A}/\mathcal{J}$ is locally free of rank $n(n-n_1)$. By definition, there is an \'etale covering $U\rightarrow X$ and a locally free sheaf $\mathcal{E}$ of rank $n$ with the following trivializing diagram:
\begin{displaymath}
\begin{xy}
\xymatrix{
	\mathrm{Grass}(n_1,\mathcal{E}) \ar[r]^{\pi} \ar[d]_{q}    &   \mathrm{BS}(n_1,\mathcal{A}) \ar[d]^{p}                   \\
	U \ar[r]^{g}             &   X             
}
\end{xy}
\end{displaymath}
In the same way one defines the twisted relative flag $\mathrm{BS}(n_1,...,n_r,\mathcal{A})$ as the scheme representing the functor $F:\mathrm{Sch}/X\rightarrow \mathrm{Sets}$, where $(\psi:Y\rightarrow X )$ is mapped to the set of left ideals $\mathcal{J}_1\subset...\subset \mathcal{J}_r$ of $\psi^*\mathcal{A}$ such that $\psi^*\mathcal{A}/\mathcal{J}_i$ is locally free of rank $n(n-n_i)$. As for the generalized Brauer--Severi schemes, there is an \'etale covering $U\rightarrow X$ and a locally free sheaf $\mathcal{E}$ of rank $n$ with diagram
\begin{displaymath}
\begin{xy}
\xymatrix{
	\mathrm{Flag}_U(n_1,...,n_r,\mathcal{E}) \ar[r]^{\pi} \ar[d]_{q}    &   \mathrm{BS}(n_1,...,n_r,\mathcal{A}) \ar[d]^{p}                   \\
	U \ar[r]^{g}             &   X             
}
\end{xy}
\end{displaymath}
Note that the usual Brauer--Severi schemes are obtained from the generalized one by setting $n_1=1$. In this case one has a well known one-to-one correspondence between sheaves of Azumaya algebras of rank $n^2$ on $X$ and Brauer--Severi schemes of relative dimension $n-1$ via $\check{H}^1(X_{et}, \mathrm{PGL}_n)$ (see \cite{GRO}). Note that if the base scheme $X$ is a point a sheaf of Azumaya algebras on $X$ is a central simple $k$-algebra and the generalized Brauer--Severi schemes are the generalized Brauer--Severi varieties from above. Consider a twisted flag $X=SB(n_1,...,n_r, A) \rightarrow \mathrm{Spec}(k)$. Such an $X$ is an \emph{inner form} of a partial flag variety.
That is, there is a cartesian square of the form
\begin{displaymath}
\begin{xy}
\xymatrix{
	\mathrm{Grass}_L(n_1,...,n_r,V) \ar[r]^{\pi} \ar[d]_{q}    &   \mathrm{BS}(n_1,...,n_r, A) \ar[d]^{p}                   \\
	\mathrm{Spec}(L) \ar[r]^{\pi}             &   \mathrm{Spec}(k)             
}
\end{xy}
\end{displaymath}
where $L/k$ is a Galois extension and the 1-cocycle
$\mathrm{Gal}(L/k)\rightarrow \mathrm{Aut}(\mathrm{Grass}_L(n_1,...,n_r,V))$ factors through $\mathrm{PGL}(V)$.

\section{Semiorthogonal decompositions} 
Let $\mathcal{D}$ be a triangulated category and $\mathcal{C}$ a triangulated subcategory. The subcategory $\mathcal{C}$ is called \emph{thick} if it is closed under isomorphisms and direct summands. For a subset $A$ of objects of $\mathcal{D}$ we denote by $\langle A\rangle$ the smallest full thick subcategory of $\mathcal{D}$ containing the elements of $A$. 
For a smooth projective variety $X$ over $k$, we denote by $D^b(X)$ the bounded derived category of coherent sheaves on $X$. Moreover, if $B$ is an associated $k$-algebra, we write $D^b(B)$ for the bounded derived category of finitely generated left $B$-modules.

\begin{defi}
	\textnormal{Let $A$ be a division algebra over $k$, not necessarily central. An object $\mathcal{E}^{\bullet}\in D^b(X)$ is called \emph{$A$-exceptional} if $\mathrm{End}(\mathcal{E}^{\bullet})=A$ and $\mathrm{Hom}(\mathcal{E}^{\bullet},\mathcal{E}^{\bullet}[r])=0$ for $r\neq 0$. By \emph{generalized exceptional object}, we mean $A$-exceptional for some division algebra $A$ over $k$. 
		If $A=k$, the object $\mathcal{E}^{\bullet}$ is called \emph{exceptional}.} 
\end{defi}
\begin{defi}
	\textnormal{A totally ordered set $\{\mathcal{E}^{\bullet}_1,...,\mathcal{E}^{\bullet}_n\}$ of generalized exceptional objects on $X$ is called an \emph{generalized exceptional collection} if $\mathrm{Hom}(\mathcal{E}^{\bullet}_i,\mathcal{E}^{\bullet}_j[r])=0$ for all integers $r$ whenever $i>j$. A generalized exceptional collection is \emph{full} if $\langle\{\mathcal{E}^{\bullet}_1,...,\mathcal{E}^{\bullet}_n\}\rangle=D^b(X)$ and \emph{strong} if $\mathrm{Hom}(\mathcal{E}^{\bullet}_i,\mathcal{E}^{\bullet}_j[r])=0$ whenever $r\neq 0$. If the set $\{\mathcal{E}^{\bullet}_1,...,\mathcal{E}^{\bullet}_n\}$ consists of exceptional objects it is called \emph{exceptional collection}.}
\end{defi}

 Recall that a full thick triangulated subcategory $\mathcal{D}$ of $D^b(X)$ is called \emph{admissible} if the inclusion $\mathcal{D}\hookrightarrow D^b(X)$ has a left and right adjoint functor. 
\begin{defi}
	\textnormal{Let $X$ be a smooth projective variety over $k$. A sequence $\mathcal{D}_1,...,\mathcal{D}_n$ of full admissible triangulated subcategories of $D^b(X)$ is called \emph{semiorthogonal} if $\mathcal{D}_j\subset \mathcal{D}_i^{\perp}=\{\mathcal{F}^{\bullet}\in D^b(X)\mid \mathrm{Hom}(\mathcal{G}^{\bullet},\mathcal{F}^{\bullet})=0$, $\forall$ $ \mathcal{G}^{\bullet}\in\mathcal{D}_i\}$ for $i>j$. Such a sequence defines a \emph{semiorthogonal decomposition} of $D^b(X)$ if the smallest thick full subcategory containing all $\mathcal{D}_i$ equals $D^b(X)$.}
\end{defi}
\noindent
For a semiorthogonal decomposition we write $D^b(X)=\langle \mathcal{D}_1,...,\mathcal{D}_n\rangle$.

\begin{rema}
	\textnormal{Let $\mathcal{E}^{\bullet}_1,...,\mathcal{E}^{\bullet}_n$ be a full generalized exceptional collection on $X$. It is easy to verify that by setting $\mathcal{D}_i=\langle\mathcal{E}^{\bullet}_i\rangle$ one gets a semiorthogonal decomposition $D^b(X)=\langle \mathcal{D}_1,...,\mathcal{D}_n\rangle$.}
\end{rema}

\section{Proof of Theorem 1.1}
\begin{lem}
	Let $X$ be the inner twisted flag from Theorem 1.1. Then $D^b(X)$ admits a semiorthogonal decomposition with components being equivalent to $D^b(A^{\otimes j})$ for suitable positive integers $j\geq 0$.
\end{lem}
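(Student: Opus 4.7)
The strategy is to realize $X$ as the top of a tower of generalized Brauer--Severi schemes over $\mathrm{Spec}(k)$ and iterate a relative Bernardara-type decomposition at each stage. Set $X_0 = \mathrm{Spec}(k)$ and $X_i = \mathrm{BS}(n_1,\ldots,n_i,A)$ for $1 \leq i \leq r$, so that $X = X_r$. Over a splitting field $L$ of $A$, the split flag $\mathrm{Flag}_L(n_1,\ldots,n_i,V)$ is a $\mathrm{Grass}(n_i - n_{i-1},\, n - n_{i-1})$-bundle over $\mathrm{Flag}_L(n_1,\ldots,n_{i-1},V)$, given by choosing an $(n_i - n_{i-1})$-plane inside the rank $n - n_{i-1}$ tautological quotient. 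Descending this structure via the $\mathrm{PGL}(V)$-cocycle, the projection $p_i : X_i \to X_{i-1}$ is the generalized Brauer--Severi scheme $\mathrm{BS}(n_i - n_{i-1}, \mathcal{A}_{i-1})$ for an Azumaya algebra $\mathcal{A}_{i-1}$ on $X_{i-1}$ of degree $n - n_{i-1}$, whose Brauer class is the pullback of $[A]$.

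Next, I would apply the relative Kapranov--Bernardara decomposition for generalized Brauer--Severi schemes: for $p : \mathrm{BS}(d, \mathcal{A}) \to Y$ with $\mathcal{A}$ Azumaya of degree $m$, there is a semiorthogonal decomposition
\[
D^b(\mathrm{BS}(d, \mathcal{A})) \;=\; \bigl\langle D^b(Y, \mathcal{A}^{\otimes j(\lambda)}) \bigr\rangle_{\lambda \subset d \times (m-d)},
\]
where $\lambda$ runs over Young diagrams in the $d \times (m-d)$ rectangle and $j(\lambda)$ is an integer depending only on $\lambda$. This is the relative version of Blunk's theorem and recovers Bernardara's decomposition of the Brauer--Severi variety when $Y = \mathrm{Spec}(k)$ and $d = 1$. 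Applying this to each $p_i$ and splicing the resulting relative decompositions down the tower yields a full semiorthogonal decomposition of $D^b(X)$ whose components, after reaching $X_0 = \mathrm{Spec}(k)$, are module categories over iterated tensor products of the $\mathcal{A}_{i-1}^{\otimes j(\lambda_i)}$. Since each $\mathcal{A}_{i-1}$ has Brauer class $[A]$, the Brauer class of any such tensor product reduces to $[A]^{\otimes j}$ for a well-defined integer $j \geq 0$, and Morita equivalence then identifies the corresponding module category with $D^b(A^{\otimes j})$.

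The main obstacle is the bookkeeping: one must verify that $\mathcal{A}_{i-1}$ genuinely has Brauer class equal to the pullback of $[A]$ on $X_{i-1}$, and track which power of $[A]$ appears in each composed component indexed by a chain of Schur labels $(\lambda_1,\ldots,\lambda_r)$. The first point can be checked fpqc-locally after passing to the split level, where the projective bundle associated with the tautological quotient has the prescribed Brauer class by construction of the generalized Brauer--Severi scheme. The second reduces to a weight computation modulo $\mathrm{ind}(A)$, after which each final component is equivalent to $D^b(A^{\otimes j})$ for a suitable $j \geq 0$, as required. I do not expect any essential difficulty beyond this; the real content of Theorem 1.1 will lie in how the resulting list of exponents $j$ interacts with the rationality hypothesis, not in the existence of the decomposition itself.
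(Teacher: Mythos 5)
Your tower strategy is natural, but its engine is a statement that is not actually available: a \emph{relative} semiorthogonal decomposition, with fullness, for generalized Brauer--Severi schemes $\mathrm{BS}(d,\mathcal{A})\to Y$ of relative type $\mathrm{Grass}(d,m)$ with $d\geq 2$. The theorem you appeal to (and attribute to Blunk) is Bernardara's, and it covers only the case $d=1$, i.e.\ Brauer--Severi (projective) bundles. For $d\geq 2$ the semiorthogonality of the expected components $D^b(Y,\mathcal{A}^{\otimes|\lambda|})$ is the easy half; \emph{fullness} is precisely the hard point, and even over a field it is a theorem of Baek rather than a formal consequence of Kapranov's collection. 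A relative version over an arbitrary base is not established by anything you cite, so each stage of your induction with $n_i-n_{i-1}\geq 2$ rests on an unproved input. There is a second gap in the splicing step: the relative decomposition of $D^b(X_i)$ over $X_{i-1}$ has components that are \emph{twisted} derived categories $D^b(X_{i-1},\mathcal{A}_{i-1}^{\otimes j})$, so to descend the tower you must further decompose these twisted categories, not $D^b(X_{i-1})$ itself. Your argument silently substitutes the untwisted decomposition here; closing the induction requires rerunning the relative theorem in the $\beta$-twisted setting for every Brauer class $\beta$ that appears.

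The paper avoids the tower altogether. Following Levine--Srinivas--Weyman, the sheaves $(S^{\alpha(1)}\mathcal{R}_1\otimes\cdots\otimes S^{\alpha(r)}\mathcal{R}_r)^{\oplus n\cdot|\alpha|}$ on the split flag descend in one step to sheaves $\mathcal{J}_{\alpha}$ on $X=\mathrm{BS}(n_1,\dots,n_r,A)$, with $\mathrm{End}(\mathcal{J}_{\alpha})$ Brauer-equivalent to $A^{\otimes|\alpha|}$ and hence $\langle\mathcal{J}_{\alpha}\rangle\simeq D^b(\mathrm{End}(\mathcal{J}_{\alpha}))\simeq D^b(A^{\otimes|\alpha|})$ by Morita equivalence; the assertion that these subcategories form a semiorthogonal decomposition of $D^b(X)$ is then quoted directly from Baek. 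If you wish to keep your inductive scheme, you would have to prove the relative and twisted analogues of Baek's theorem yourself, which is substantially more than this lemma asks for; otherwise the argument should be restructured to invoke the absolute statement for the whole flag at once, as the paper does.
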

\begin{proof}
	On a relative flag $\mathrm{Grass}(n_1,...,n_r,\mathcal{E})$ one has tautological subbundles sitting in the following sequence
	\begin{eqnarray*}
	0\longrightarrow \mathcal{R}_1\longrightarrow \cdots \longrightarrow \mathcal{R}_r\longrightarrow q^*\mathcal{E}\longrightarrow \mathcal{T}_1\longrightarrow \cdots \longrightarrow \mathcal{T}_r\longrightarrow 0.	
		\end{eqnarray*}
	In our case $\mathcal{E}$ is a $k$-vector space of dimension $n$ denoted by $V$. Recall that $X=\mathrm{BS}(n_1,...,n_r,A)$ is the inner twisted flag obtained from $\mathrm{Grass}(n_1,...,n_r,V)$ by descent.
Denote by $\alpha(1),...,\alpha(r)$ partitions of the forms
\begin{eqnarray*}
	(\alpha_1,...,\alpha_{l_1}), (\alpha_1,...,\alpha_{l_2}),...,(\alpha_1,...,\alpha_{l_r}),
	\end{eqnarray*}
with $0\leq \alpha_i\leq l_2-l_1,...,0\leq \alpha_i\leq l_m-l_{m-1}, 0\leq \alpha_i\leq n-l_m$. Let $|\alpha|=\sum_{i=1}^r|\alpha(i)|$, where $|\alpha(j)|=\sum_{s=1}^{l_{j}}\alpha_s$ for $1\leq j\leq r$. Then the sheaf $(S^{\alpha(1)}\mathcal{R}_1\otimes\cdots \otimes S^{\alpha(r)}\mathcal{R}_r)^{\oplus n\cdot |\alpha|}$ descents to a sheaf $\mathcal{J}_{\alpha}$ on $X$ (see \cite{LSW}, Section 4). 
As in the case of generalized Brauer--Severi varieties, one can show that $\mathrm{End}(\mathcal{J}_{\alpha})$ is Brauer-equivalent to $A^{\otimes|\alpha|}$.
Note that $D^b(\mathrm{End}(\mathcal{J}_{\alpha}))$ is equivalent to $D^b(A^{\otimes |\alpha|})$ by Morita equivalence. Now it is proved in \cite{BAEK} that the subcategories $\langle \mathcal{J}_{\alpha}\rangle \simeq D^b(\mathrm{End}(\mathcal{J}_{\alpha}))$ form a semiorthogonal decomposition. 
\end{proof} 
\begin{prop}
	Let $X$ be the inner twisted flag from above. Then $\mathrm{rdim}(X)\leq \mathrm{ind}(A)-1$.
\end{prop}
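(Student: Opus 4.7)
The plan is to combine the semiorthogonal decomposition of $D^b(X)$ provided by the previous lemma with Bernardara's semiorthogonal decomposition for a Brauer--Severi variety. By the lemma, $D^b(X)$ splits into components each equivalent to $D^b(A^{\otimes j})$ for various $j\geq 0$. To bound $\mathrm{rdim}(X)$, I only need to exhibit, for each such $j$, a single smooth projective $k$-variety of dimension at most $\mathrm{ind}(A)-1$ whose derived category admits $D^b(A^{\otimes j})$ as an admissible subcategory.

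The natural choice is $Y := \mathrm{BS}(D)$, the classical Brauer--Severi variety of the central division algebra $D$ underlying $A$. By the Wedderburn theorem, $A$ and $D$ are Brauer-equivalent, and $\dim Y = \deg(D)-1 = \mathrm{ind}(A)-1$, which matches the target dimension. Bernardara's theorem gives a semiorthogonal decomposition
\begin{eqnarray*}
D^b(\mathrm{BS}(D)) = \langle D^b(k),\, D^b(D),\, D^b(D^{\otimes 2}),\, \dots,\, D^b(D^{\otimes \deg(D)-1})\rangle,
\end{eqnarray*}
so each $D^b(D^{\otimes m})$ with $0\leq m\leq \mathrm{ind}(A)-1$ sits as an admissible subcategory of $D^b(Y)$.

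It remains to match the components $D^b(A^{\otimes j})$ appearing in the lemma with those in Bernardara's decomposition. For this I would argue in two steps. First, since $A\sim D$ in the Brauer group, one has $A^{\otimes j}\sim D^{\otimes j}$, and hence by Morita equivalence $D^b(A^{\otimes j})\simeq D^b(D^{\otimes j})$. Second, since the period of $D$ divides its index, one has $D^{\otimes j}\sim D^{\otimes (j \bmod \mathrm{per}(D))}$, so after reducing the exponent one can assume $0\leq j\leq \mathrm{per}(D)-1 \leq \mathrm{ind}(A)-1$. Combining these reductions with Bernardara's decomposition produces the required admissible embedding of each component into $D^b(Y)$.

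The only point that needs care is that Bernardara's decomposition really lands in admissible subcategories equivalent to $D^b(D^{\otimes m})$ as $k$-linear triangulated categories (not just Brauer-equivalent algebras), but this is exactly the content of the theorem. With this assembled, one concludes that $D^b(X)$ is representable by $Y$ in dimension $\mathrm{ind}(A)-1$, i.e.\ $\mathrm{rdim}(X)\leq \mathrm{ind}(A)-1$, as desired.
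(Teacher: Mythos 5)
Your proposal is correct and follows essentially the same route as the paper: invoke the semiorthogonal decomposition from the preceding lemma, take $Y$ to be the Brauer--Severi variety of the underlying division algebra $D$, and use Bernardara's decomposition of $D^b(Y)$ to embed each component $D^b(A^{\otimes j})$ admissibly. The paper simply cites Bernardara for the embedding step, whereas you spell out the Morita equivalence and the reduction of the exponent modulo the period; this is a useful elaboration but not a different argument.
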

\begin{proof}
According to Lemma 4.1, $D^b(X)$ admits a semiorthogonal decomposition with components being equivalent to $D^b(A^{\otimes j})$ for suitable positive integers $j\geq 0$. Now let $A=M_s(D)$ according to the Wedderburn theorem. Since any of the derived categories $D^b(A^{\otimes j})$ can be embedded into $D^b(Y)$, where $Y$ is the Brauer--Severi variety corresponding to $D$ (see \cite{BERS}), the assertion follows from the fact that $\mathrm{deg}(D)=\mathrm{ind}(A)$ and $\mathrm{dim}(Y)=\mathrm{ind}(A)-1$.	
\end{proof}
\begin{lem}
	Let $\alpha\geq1$ be an integer. Then $\alpha x^2-x-1\geq 0$ for all $x\geq 2$.
\end{lem}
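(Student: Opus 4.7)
The plan is to reduce to the case $\alpha = 1$ and then verify the statement by a direct elementary estimate.

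First I would observe that since $\alpha \geq 1$ and $x^2 \geq 0$, one has $\alpha x^2 \geq x^2$, so it suffices to prove the stronger-looking but weaker-to-handle inequality $x^2 - x - 1 \geq 0$ for all $x \geq 2$.

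For the latter, I would rewrite $x^2 - x - 1 = x(x-1) - 1$. For $x \geq 2$ we have $x \geq 2$ and $x - 1 \geq 1$, so $x(x-1) \geq 2$, which gives $x(x-1) - 1 \geq 1 > 0$. Combining this with the first step yields $\alpha x^2 - x - 1 \geq x^2 - x - 1 \geq 1 \geq 0$, as required.

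There is no real obstacle here; the only subtlety is to use $\alpha \geq 1$ to discard the dependence on $\alpha$ before handling the quadratic, rather than trying to analyze the roots of $\alpha x^2 - x - 1$ directly (which would require keeping track of $\alpha$ in the discriminant).
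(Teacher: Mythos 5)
Your proof is correct, but it takes a different route from the paper. The paper analyzes the quadratic $\alpha x^2-x-1$ directly as a function of $x$: it notes that the function is strictly increasing for $x>1/2\alpha$ and that its positive root $(1/2\alpha)(1+\sqrt{1+4\alpha})$ is less than $2$ for every integer $\alpha\geq 1$, so the function is nonnegative on $[2,\infty)$. That argument does exactly the discriminant bookkeeping you deliberately avoid, and it leaves a small computation implicit (checking $(1+\sqrt{1+4\alpha})/(2\alpha)<2$, i.e. $\sqrt{1+4\alpha}<4\alpha-1$, which after squaring reduces to $4\alpha>3$). Your reduction --- using $\alpha\geq 1$ to replace $\alpha x^2$ by $x^2$ and then bounding $x(x-1)-1\geq 1$ for $x\geq 2$ --- is more elementary and arguably cleaner, since it needs no calculus or root formula; the paper's version has the minor advantage of exhibiting the exact threshold where the inequality starts to hold, but for the purposes of Theorem~1.1 nothing beyond nonnegativity at $x\geq 2$ is used. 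Both proofs are complete and valid.
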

\begin{proof}
	The function $\alpha x^2-x-1$ is strictly monotonically increasing for $x> 1/2\alpha$ and has its positive root at $(1/2\alpha)\cdot (1+\sqrt{1+4\alpha})$ which is $<2$ for all $\alpha \geq 1$.
\end{proof}
\begin{proof}(of Theorem 1.1)\\
Recall the well known fact that $X(k)\neq \emptyset$ iff $\mathrm{ind}(A)\mid d$, where $d=\mathrm{gcd}(n,n_1,...,n_r)$ denotes the greatest common divisor. So if $X$ is $k$-rational, it has a $k$-rational point and therefore $\mathrm{ind}(A)\mid d$. Now let $n=b\cdot \mathrm{ind}(A)$ and $n_i=b_i\cdot \mathrm{ind}(A)$ for $i=1,...,r$. Then  $b,b_1,...,b_r$ are positive integers satisfying $0<b_1<b_2<\cdots <b_r<b$. This implies that $a\geq 1$ for 
\begin{eqnarray*}
a=b_1(b-b_1)+\sum_{i=2}^r(b_i-b_{i-1})(b-b_i).
\end{eqnarray*}  
Assume $\mathrm{ind}(A)>1$ and let $x=\mathrm{ind}(A)$. With $\alpha=a$, Lemma 4.3 implies
\begin{eqnarray*}
a\cdot \mathrm{ind}(A)^2-\mathrm{ind}(A)-1\geq 0.
\end{eqnarray*} 
It is easy to see that 
\begin{eqnarray*}
a\cdot \mathrm{ind}(A)^2=n_1(n-n_1)+\sum_{i=2}^r(n_i-n_{i-1})(n-n_i).
\end{eqnarray*}
This implies 
\begin{eqnarray*}
n_1(n-n_1)+\sum_{i=2}^r(n_i-n_{i-1})(n-n_i)\geq (\mathrm{ind}(A)-1)+2.
\end{eqnarray*} 
Now, since 
\begin{eqnarray*}
\mathrm{dim}(X)=n_1(n-n_1)+\sum_{i=2}^r(n_i-n_{i-1})(n-n_i)
\end{eqnarray*}
and since $\mathrm{rdim}(X)\leq \mathrm{ind}(A)-1$ according to Proposition 4.2, we conclude
\begin{eqnarray*}
	\mathrm{rcodim}(X)=\mathrm{dim}(X)-\mathrm{rdim}(X)\geq 2.
	\end{eqnarray*}
It remains to consider the case $\mathrm{ind}(A)=1$. In this case the inner twisted flag $X$ is split, i.e. is isomorphic to $\mathrm{Flag}_k(n_1,...,n_r,V)$, where $V$ is a $n$-dimensional $k$-vector space. From Kapranov \cite{KA2S}, we know that $X$ admits a full exceptional collection. But then $\mathrm{rdim}(X)=0$ according to \cite{AB1S}, Proposition 6.1.6 and hence $\mathrm{rcodim}(X)\geq 2$.
\end{proof}
\begin{rema}
	\textnormal{If the inner twisted flag $X$ from Theorem 1.1 is of dimension one, it is actually a Brauer--Severi curve. In this case \cite{NO2Z} shows that rationality of $X$ is equivalent to $\mathrm{rdim}(X)=0$. Hence $\mathrm{rcodim}(X)=1$. This is one reason why the question in the introduction assumes $\mathrm{dim}(X)\geq 2$. }
\end{rema}

\addcontentsline{toc}{section}{References}

{\small MATHEMATISCHES INSTITUT, HEINRICH--HEINE--UNIVERSIT\"AT 40225 D\"USSELDORF, GERMANY}\\
E-mail adress: novakovic@math.uni-duesseldorf.de

\end{document}